\documentclass[12pt]{article}
\usepackage[left=3cm,top=3.0cm,right=3cm,bottom=2.6cm]{geometry}

\usepackage[ansinew]{inputenc}
\usepackage{amscd,amsfonts,amsmath,amssymb,amsthm,enumerate,epsfig,float,graphics,graphicx,pst-all,yhmath}
\setlength{\parindent}{0in} \setlength{\parskip}{12pt}

\newcommand{\inte}{\operatorname*{int}}
\newcommand{\aff}{\operatorname*{aff}}
\newcommand{\bd}{\operatorname*{bd}}

\newcommand{\Rn}{\mathbb{R}^{n}}

\newtheorem{lemma}{Lemma}

\newtheorem{theorem}{Theorem}
\newtheorem{corollary}{Corollary}

\newtheorem{definition}{Definition}
\newtheorem{conjecture}{Conjecture}

\setlength{\parindent}{0in} \addtolength{\parskip}{4pt}

\newcommand{\fin}{\hfill $\Box$}

\title{Characterizations of the sphere by means of point-projections}
\author
{J. Jer\'onimo-Castro$^1$, E. Morales-Amaya$^2$, and D. J. Verdusco-Hern\'andez$^3$ \\ 
\small{$^{2,3}$Facultad de Matem\'aticas-Acapulco,}\\
\small{Universidad Aut\'onoma de Guerrero, M\'exico}\\
 \small{\texttt{$^{1}$emoralesamaya@gmail.com, $^{2}$diana.janett.h@gmail.com}}
}

\begin{document}

\maketitle

\begin{abstract}  
In this work we prove the following: let $K$ be a convex body in the Euclidean space $\mathbb{R}^n$, $n\geq 3$, contained in the interior of the unit ball of $\mathbb{R}^n$, and let $p\in \mathbb{R}^n$ be a point such that, from each point of $\mathbb{S}^{n-1}$, $K$ looks centrally symmetric and $p$ appears as the center, then $K$ is a ball.
\end{abstract}

\section{Introduction}
Consider a convex body $K$, i.e., a compact and convex set with non-empty interior in $\mathbb R^n$. As usual $\inte K$ and $\bd K$ denote the interior and boundary of $K$.  
Let $\Pi\subset \mathbb{R}^{n}$ be a hyperplane. We denote by $S_{\Pi}:\mathbb{R}^{n}\rightarrow \mathbb{R}^{n}$ the orthogonal reflection with respect to $\Pi$. We say that $K \subset \mathbb R^n$ is \emph{symmetric with respect to} $\Pi$, or that $\Pi$ is a \textit{hyperplane of symmetry} of $K$, if $S_{\Pi}(K)=K$. Let $\mathbb B^n$ and $\mathbb{S}^{n-1}$ denote the Euclidean unit ball and unit sphere of $\mathbb R^n$ with center at the origin $O$. For $u \in \mathbb{S}^{n-1}$ and $s\geq0 $, we denote by $\Pi(u,s)$ the hyperplane $\{x\in \mathbb{R}^{n} | \langle u, x\rangle = s \}$, whose unit normal vector is $u$ and its distance to the origin is equal to $s$. Moreover, we denote by $\Pi^*(u,s)$ the open half-space 
$\{x\in \mathbb{R}^{n} | \langle u, x\rangle <s \}$. For the points $x,y \in \mathbb R^n$ we  denote by $L(x,y)$ the line determined by $x$ and $y$, and by $[x,y]$ the segment with extreme points $x$ and $y$.

In order to establish our results we need to give the following definitions.

\begin{definition}
Let $K\subset \mathbb{R}^{n}$ be a convex body, $n \geq 3$, and let $x\in \mathbb{R}^{n}\setminus K$. We call the set 
$$\{x+\lambda (y-x) |  y\in K,\, \lambda\geq 0\},$$ the solid cone generated by $K$ and $x$. The boundary of this solid cone, i.e., the union of all rays starting at $x$ which do not intersect the interior of $K$, is called the cone circumscribed to $K$ with apex $x$. We denote this cone by $C_x$. 
\end{definition}

\begin{definition}
Let $C\subset\mathbb R^n$ be a convex cone with apex $x$. We say that $C$ is a symmetric cone with axis $L_x$, if there exists a line $L_x$ through $x$ such that for every $2$-dimensional plane $\Gamma$ which contains $L_x$, it holds that $L_x$ is the angle bisector of the angular region $\Gamma\cap C$. Furthermore, we say that $C$ is a right circular cone if for every hyperplane $\Pi$ orthogonal to $L_x$, with $\Pi\cap C\neq\emptyset$, the set $\Pi\cap C$ is either a point or an $(n-2)$-dimensional ball. 
\end{definition}

For $n \geq 3$ we denote by $O(n)$  the orthogonal group, i.e. the set of all the isometries of $\mathbb R^n$ that fix the origin.

Let $K\subset \mathbb R^n$ be a convex body, $n \geq 3$, and let $L$ be a line passing through the origin. We denote by $R_L :\mathbb R^n \rightarrow \mathbb R^n$ the element of $O(n)$ that acts as the identity on $L$, and sends $x$ to $-x$ on the hyperplane $L^\perp$ (the orthogonal complement of $L$). The line $L$ is said to be an axis of rotation of $K$ if the following relation holds:
\[
R_L (K) = K.
\]
Notice that if $C\subset\mathbb R^n$, $n \geq 3$, is a symmetric cone with axis $L_x$ and we choose a system of coordinates with the origin at $x$, then $L_x$ is an axis of rotation of $C$, i.e., $R_{L_x }(C) = C$.

If $K\subset \mathbb R^n,$ $n\geq 3$, is an ellipsoid, then for every $x \in \mathbb R^n \setminus K$, the cone $C_x$ is a symmetric cone. 
In fact, $C_x$ has two planes of symmetry $\Pi_1$, $\Pi_2$ such that 
$\Pi_1$, $\Pi_2$ are perpendicular and $L_x:=\Pi_1 \cap \Pi_2$, this was observed at page 24, footnote 4, of \cite{hilbert} in a statement relative to confocal system of surfaces.

Thus, the next conjecture (see \cite{odor}) seems very natural and should be true.
 
\begin{conjecture}\label{gruber}
Let $K$ be a convex body contained in the interior of $\mathbb B^n$, $n\geq 3$. If for every  $x\in \mathbb{S}^{n-1}$, $C_x$ is a symmetric cone, then $K$ is an ellipsoid.  
\end{conjecture}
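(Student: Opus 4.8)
The plan is to treat the statement as a strengthening of Gruber--Odor's theorem \cite{odor}, in which the central symmetry of the apparent outline of $K$ is imposed only at the points of the single sphere $\mathbb{S}^{n-1}$ rather than at every exterior point, and in which no smoothness of $K$ is assumed. Unwinding the hypothesis: for each $x\in\mathbb{S}^{n-1}$ the cross-section of the visual cone $C_x$ by a hyperplane orthogonal to an axis $L_x$ of $C_x$ --- the perspective silhouette of $K$ as seen from $x$ --- is a centrally symmetric convex body. The desired conclusion is affine (an ellipsoid), and it should be kept in mind that one must \emph{not} try to prove the axes $L_x$ concurrent: by Theorem~\ref{babel}, concurrency of the $L_x$ at a single point is exactly the extra datum that upgrades ``ellipsoid'' to ``sphere,'' so the ellipsoidal freedom the statement asserts is precisely what survives once that datum is removed, and the argument must be genuinely affine in nature.

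I would proceed in two stages. First, reduce to the case that $K$ is smooth and strictly convex, approximating $K$ from inside and outside and verifying that central symmetry of the $\mathbb{S}^{n-1}$-outlines is stable under these approximations; controlling this near flat faces and vertices of $K$ is already delicate (note that \cite{odor} does assume smoothness). Second, for smooth strictly convex $K$: a point $y\in\bd K$ lies on the horizon $\Gamma_x$ seen from $x$ precisely when $x$ belongs to the tangent hyperplane $H(y)$, and since $K\subset\inte B_1(n)$ each $H(y)$ meets $\mathbb{S}^{n-1}$ in an $(n-2)$-sphere, so as $x$ ranges over $\mathbb{S}^{n-1}$ the horizons $\Gamma_x$ sweep across all of $\bd K$, each boundary point being covered by a positive-dimensional family of viewpoints. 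The central symmetry of each outline yields, for every $x$, an involutive pairing of the points of $\Gamma_x$ compatible with the axis direction of $L_x$; the heart of the matter is to show that such a consistent, abundant family of involutions can exist only when $\bd K$ is a quadric. Here I would try to adapt the machinery behind characterizations of the ellipsoid by a wealth of centrally symmetric shadow boundaries (results of Olovjanishnikov and of Burton) from parallel to central projections and from all directions to the single sphere of centres; alternatively, one could first try to propagate the central symmetry off $\mathbb{S}^{n-1}$ to an open set of exterior viewpoints --- using the way the outline of a fixed body varies with the viewpoint along pencils of lines disjoint from $K$ --- and then invoke \cite{odor} directly.

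The hard part will be exactly this second stage: extracting \emph{affine} rather than merely metric rigidity from data confined to a single round sphere, with neither the freedom to push viewpoints to infinity (which would produce centrally symmetric shadow boundaries under parallel illumination) nor a smoothness hypothesis to rely on. Presumably it is this obstruction that keeps the statement a conjecture. A sensible first milestone would be to establish the smooth case in isolation, where the analytic techniques of \cite{odor} stand the best chance of being transplanted after a propagation argument, leaving the non-smooth case to be handled separately.
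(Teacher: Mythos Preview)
The statement you are addressing is Conjecture~\ref{gruber} in the paper, and it remains a conjecture there: the authors do not prove it and do not claim to. They reproduce it from Bianchi--Gruber \cite{Bianchi} and then establish only the special case (Theorem~\ref{babel}) in which all the axes $L_x$ pass through a common point $p$, concluding in that case that $K$ is a sphere. There is therefore no proof in the paper against which your proposal can be compared.

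Your text is not a proof either, and you say as much: it is a research plan with an explicit acknowledgment that the second stage --- extracting affine rigidity from central-symmetry data confined to a single sphere of viewpoints --- is precisely the obstruction that keeps the statement open. That diagnosis is accurate and matches the paper's own stance. Your observation that concurrency of the $L_x$ is exactly the extra datum forcing the sphere (via Theorem~\ref{babel}) is correct and shows you understand why the general ellipsoidal conclusion would require a genuinely different, affine argument from anything the paper supplies. But the two proposed routes --- propagating central symmetry from $\mathbb{S}^{n-1}$ to an open set of exterior viewpoints so as to invoke \cite{odor}, or transplanting shadow-boundary characterizations from parallel to central projection --- are both speculative; neither is carried out, and you give no mechanism for the propagation step, which is the crux. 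In short, there is no gap to name because there is no proof here, only a reasonable outline of where one might look; and the paper offers nothing further on this front beyond posing the weaker Conjecture~\ref{chiquita} as a next step.
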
 

Our main result is Theorem \ref{babel}, which proves a special case of Conjecture \ref{gruber}, namely, we assume that all the axis of the cones are passing through one point. We have decided to present separately the case $n=2$ of Theorem \ref{babel}, Theorem \ref{blanco}, since it could be considered as a characterization of the circle, similar in some sense, to the main theorem in \cite{jero}.

\begin{theorem}\label{blanco}
Let $K$ be a convex body contained in the interior of $\mathbb B^2$ and let $p\in \mathbb R^n$. If for every $x\in\mathbb S^1$ we have that $L_x$ passes through $p$, then $K$ is a disc with center at $p$.
\end{theorem}
Notice that every two dimensional cone is symmetric cone, therefore $L_x$ is meaningful in dimension $n=2$.  
\begin{theorem}\label{babel}
Let $K$ be a convex body contained in the interior of $\mathbb B^n$, $n\geq 3 $,  and let $p\in \mathbb R^n$. If for every  $x\in \mathbb{S}^{n-1}$, $C_x$ is a symmetric cone and $L_x$ passes through $p$, then $K$ is a ball with center at $p$.  
\end{theorem}

As a corollary of Theorem \ref{babel} we have the following.

\begin{corollary}[\textrm{Matsuura \cite{matsura}}]\label{matsu}  
Let $K$ be a convex body contained in the interior of $\mathbb B^n$, $n\geq 3 $. If for every  $x\in \mathbb{S}^{n-1}$ we have that $C_x$ is a right circular cone, then $K$ is a ball.  
\end{corollary}

For the case when the apexes are in a hyperplane we have the following.

\begin{theorem}\label{mari}
Let $K\subset \mathbb R^n$ be a convex body, $n\geq 3 $, and let $\Pi$ be a hyperplane. If for every $x \in \Pi \setminus K$ the cone $C_x$ is a right circular cone, then $K$ is a ball.  
\end{theorem}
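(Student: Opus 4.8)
The plan is to pass from the hypothesis about finite vertices to the statement that every circumscribing cylinder of $K$ with generators parallel to $\Pi$ is a right circular cylinder, and then to recover $K$ from this family of round shadows. Fix once and for all the linear hyperplane $\Pi_{0}$ parallel to $\Pi$ (so $\dim\Pi_{0}=n-1\ge 2$), a unit normal $e$ of $\Pi$ (so $\Pi_{0}=e^{\perp}$), write $h_{K}$ for the support function of $K$, and write $K|v^{\perp}$ for the orthogonal projection of $K$ onto $v^{\perp}$, $v\in\Sn$.

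For the first step, fix $u\in\Pi_{0}\cap\Sn$ and an arbitrary base point $x_{0}\in\Pi$. For $t$ large the vertex $x_{t}:=x_{0}+tu$ lies in $\Pi\setminus K$, so the solid cone $\widehat C_{x_{t}}=\bigcup_{y\in K}\aff\{x_{t},y\}$ is a solid right circular (double) cone; as $t\to\infty$ the cap of directions $\{(y-x_{t})/\|y-x_{t}\|:y\in K\}$ shrinks to the single direction $-u$, so the half-angle of $\widehat C_{x_{t}}$ tends to $0$, its axis becomes parallel to $u$, and $\widehat C_{x_{t}}$ converges, in the local Hausdorff sense, to the circumscribing cylinder $K+\R u$. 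I would then slice by a fixed hyperplane $H$ orthogonal to $u$ that meets $\inte K$: on one hand $\widehat C_{x_{t}}\cap H$ converges to $(K+\R u)\cap H$, a translate of $K|u^{\perp}$; on the other hand $\widehat C_{x_{t}}\cap H$ is a bounded planar section of a right circular cone whose half-angle tends to $0$ and whose axis tends to the normal of $H$, hence an ellipsoid of eccentricity tending to $0$, so its limit is a ball. Therefore $K|u^{\perp}$ is an $(n-1)$-dimensional ball, which I denote by $D_{u}$; and this holds for every $u\in\Pi_{0}\cap\Sn$.

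For the second step, note first that since $e\in u^{\perp}$ for every $u\in\Pi_{0}$, the diameter of $D_{u}$ equals the width $h_{K}(e)+h_{K}(-e)$ of $K$ in direction $e$; hence all the $D_{u}$ have the same radius $\rho$. If $c_{u}$ denotes the centre of $D_{u}$, then $h_{K}(w)=\la c_{u},w\ra+\rho\|w\|$ for $w\in u^{\perp}$, and taking $w=e$ shows that $\la c_{u},e\ra=h_{K}(e)-\rho$ does not depend on $u$. Next I would look at the face $F^{+}=\{x\in K:\la x,e\ra=h_{K}(e)\}$: for each $u$ the projection $F^{+}|u^{\perp}$ is the unique point of $D_{u}$ maximising $\la\,\cdot\,,e\ra$, so $F^{+}$ lies on the line through that point in direction $u$; intersecting these lines over two non-parallel $u\in\Pi_{0}\cap\Sn$ (available because $\dim\Pi_{0}\ge 2$) forces $F^{+}$ to be a single point $P^{+}$, and symmetrically $\{x\in K:\la x,e\ra=-h_{K}(-e)\}$ is a single point $P^{-}$. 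Computing $c_{u}$ from $P^{+}$ (the top of $D_{u}$) and from $P^{-}$ (its bottom) and comparing, one gets $(P^{+}-P^{-})|u^{\perp}=2\rho e$ for all $u$, which forces $P^{+}$ and $P^{-}$ to have equal components in $\Pi_{0}$; after translating $K$ we may assume $P^{\pm}\in\R e$, and then a further translation along $e$ gives $c_{u}=0$ for every $u$. Consequently $h_{K}(w)=\rho\|w\|$ for every $w$ lying in some $u^{\perp}$ with $u\in\Pi_{0}\cap\Sn$; since $n\ge 3$, for every $w\neq 0$ the subspace $w^{\perp}\cap\Pi_{0}$ has dimension $\ge n-2\ge 1$, so that union of hyperplanes is all of $\Rn$. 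Hence $K$ is a ball.

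I expect the delicate point to be the limit in the second paragraph: one must verify that the solid cones $\widehat C_{x_{t}}$ really converge locally to the cylinder $K+\R u$, that for large $t$ the section by $H$ stays bounded and non-degenerate — both consequences of the boundedness of $K$ together with the axis direction tending to $u$ — and that a bounded conic section of a thin, nearly axis-aligned cone has eccentricity tending to $0$. Everything afterwards is routine support-function bookkeeping. Note that the argument uses $n\ge 3$ twice, namely to obtain two independent directions in $\Pi_{0}$ and to cover $\Rn$ by the hyperplanes $u^{\perp}$, in accordance with the failure of the statement for $n=2$.
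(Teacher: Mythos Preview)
Your argument is correct, and it follows a genuinely different route from the paper. The paper first proves a concurrence lemma (all the axes $L_{x}$ pass through a common point $p$), then in dimension $3$ shows that every plane through $p$ is a plane of symmetry of $K$ via a bisector-angle lemma, and finally handles $n>3$ by induction on the dimension. You instead bypass the concurrence of the axes entirely: you push the vertex to infinity along directions $u\in\Pi_{0}$, obtain that every shadow $K|u^{\perp}$ is an $(n-1)$-ball, and then reconstruct $K$ from these round shadows by support-function bookkeeping. Your worry about the ``delicate point'' is unfounded in the end: writing the cone as $\langle x-x_{t},v_{t}\rangle^{2}=\cos^{2}\alpha_{t}\,\|x-x_{t}\|^{2}$ and intersecting with $H=u^{\perp}+\mathrm{const}$, a short computation gives that the slice is an ellipsoid with one semi-axis equal to $R/\sqrt{\cos^{2}\alpha_{t}-\sin^{2}\theta_{t}}$ and the remaining $n-2$ semi-axes equal to $R/\cos\alpha_{t}$, where $\theta_{t}$ is the angle between $v_{t}$ and $u$; since $\theta_{t}\to 0$ (the spherical cap of directions to $K$ shrinks to $-u$, and $-v_{t}$ is its centre), the ratio tends to $1$ regardless of the relative rates of $\alpha_{t}$ and $\theta_{t}$. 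What your approach buys is a dimension-free, coordinate argument that avoids both the induction and the symmetry lemma; what the paper's approach buys is the explicit geometric information that the axes are concurrent, which it reuses elsewhere (Lemma~\ref{erotico} and Corollary~\ref{matsu}).
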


With a restriction in the position of the hyperplane we can say a little more.

\begin{theorem}\label{plano}
Let $K\subset \mathbb R^n$ be a convex body, $n\geq 2$, and let $\Pi$ be a hyperplane tangent to $K$. Suppose there exists a point $p$ in the interior of $K$ such that for every $x \in \Pi \setminus K$ the cone $C_x$ is symmetric and $L_x$ passes through $p$, then $K$ is a ball.  
\end{theorem}

We have obtained our results while exploring a family of problems concerning cha\-rac\-te\-ri\-za\-tion of spheres and ellipsoids in terms of geometric pro\-per\-ties of cones which circumscribe convex bodies. We considered the papers \cite{mora}, \cite{Bianchi}, \cite{matsura}, \cite{monti-mora}, and particularly Conjecture 2 in \cite{Bianchi} (which we reproduce here as Conjecture \ref{gruber}). Such conjecture was inspired by the following  characterization of the sphere due to S. Matsuura \cite{matsura}: \textit{If a convex body $K\subset \mathbb{R}^n$, $n\geq 3$, is contained in the interior of the region enclosed by a convex surface $S$, and looks spherical from each point of $S$, then $K$ is a ball}. 

The work \cite{myrosh} contains a characterization of polytopes in terms of non-central sections as well as a characterization regarding the visual recognition of polytopes. Questions related to measures of visual cones rather than shapes were also studied in \cite{Kurusa}.
We recommend seeing the Chapter 5 of \cite{gardner} for related results. 

In \cite{odor} an important evidence of the veracity of Conjecture \ref{gruber} was given, namely, it was proved there that if a convex body $K\subset \mathbb{R}^n$, $n\geq 3$, whose boundary is a surface of class $C^{4}$ with positive Gauss curvature, looks centrally symmetric from every point in the exterior of $K$, sufficiently close  to $K$, then $K$ is an ellipsoid.  

The main result in this work is Theorem \ref{babel}. Comparing with Gruber-Odor's Theorem \cite{odor}, we have reduced, substantially, the quantity of those points from where the convex body looks centrally symmetric. We assumed information of the circumscribed cones to $K$ only from the points in a sphere. However, we add the condition that there is a point $p$ that looks as the center of the body when it is observed from the points in such a sphere. As an immediate consequence of Theorem \ref{babel}, we obtained Corollary \ref{matsu} which is a special case of Matsuura's Theorem. We only considered the case $S=\mathbb{S}^{n-1}$. First we proved, under Matsuura's Theorem conditions, that all the axis of the spherical cones where the convex body is inscribed are concurrent, see Lemma \ref{erotico}. However, our proof of the general case, i.e., $n>3$, of such restricted version of Matsuura's Theorem can be given directly. In \cite{matsura}, Matsuura gives the procedure to carry out the generalization but he did not provide the complete proof. 

Theorem \ref{mari} is the natural variant of Corollary \ref{matsu}, we replaced the sphere by a hyperplane. We decided to include Theorem \ref{mari} in this work because we did not find an explicit reference about this elementary, however, interesting result.    

\section{Proofs of Theorems \ref{blanco}, \ref{babel}, and Corollary \ref{matsu}}
In order to prove Theorem \ref{blanco} we need to give some definitions and to prove a lemma which is interesting by itself. In what follows, the boundary of a given disc will be called a circle. Let $\Gamma$ be a circle in the plane and let $\Omega$ be another circle contained in the interior of $\Gamma$. For every point $x_0\in\Gamma$ we define the \emph{Poncelet-polygonal} $(x_0,x_1,x_2,x_3, \ldots)$ such that the points $x_0$, $x_1$, $x_2$, $x_3$, . . . are arranged such that the segments $[x_0,x_1]$, $[x_1,x_2]$, $[x_2,x_3]$, . . . are all tangent to $\Omega$.  The mapping $F$ such that $x_{i+1}=F(x_i)$ is called the \emph{Poncelet-mapping}. If for a positive integer number $n$ we have that $x_n=x_0$, i.e., $x_n=F^n(x_0)=x_0,$ by the well known Theorem of Poncelet (see for instance \cite{Flatto}, or \cite{Tabachnikov}) we know that for any other point $y_0\in\Gamma$ it holds that $y_n=F^n(y_0)=y_0$. If this is the case we say that $\Omega$ has the \emph{closure property} with respect to $\Gamma$. It is also known that the map $F$ has an invariant measure and hence by Denjoy's theorem (see for instance Theorem 12.3 in \cite{Flatto}) we have that $F$ is conjugate to a circle rotation. A very useful consequence of this fact is that any given circle $\Omega$, inside $\Gamma$, has either the closure property or for any point $x_0\in \Gamma$ the set $\{x_0,x_1,x_2,x_3, \ldots \}$ is a dense set in $\Gamma$ (see \cite{Flatto}).  

We say that an orientation preserving homeomorphism $F$ is conjugated to a circle rotation $R$ if there exists an homeomorphism $h:\mathbb S^1 \longrightarrow \mathbb S^1$, such that $h\circ F=R\circ h.$
 
\begin{lemma}\label{La_Catedral}
Let $\Gamma$ be a circle with center $O$ and radius $r$ and let $p$ be a point at distance $\lambda < r$ from $O$. Then, for every two numbers $r_1<r_2<r-\lambda$ there exists a number $r_3$, with $r_1<r_3 \leq r_2$, such that the circle $\Gamma_3$ with radius $r_3$ and center $p$ does not have the closure property with respect to $\Gamma$. 
\end{lemma}

\begin{figure}[H]
    \centering
    \includegraphics[width=.68\textwidth]{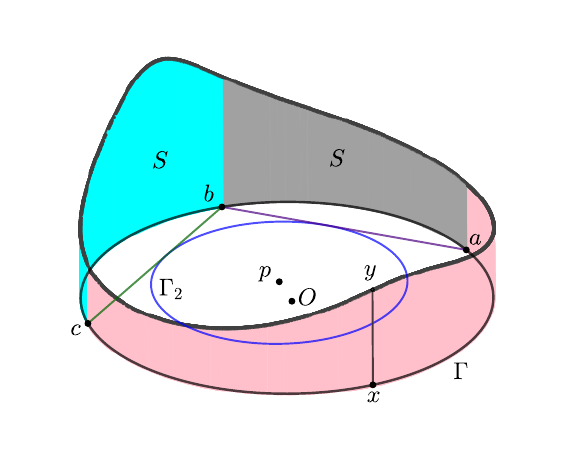}
    \caption{The area in Jacobi's surface corresponding to every chord tangent to $\Gamma_2$ is constant}
    \label{Jacobi}
\end{figure}

\emph{Proof} Consider the circle $\Gamma_2$ with center $p$ and radius $r_2$. If $\Gamma_2$ does not have the closure property with respect to $\Gamma$ then we choose $r_3=r_2$ and $\Gamma_3=\Gamma_2$. If $\Gamma_2$ has the closure property we proceed as follows: we construct the Jacobi's surface, i.e., the surface in the circular cylinder with base $\Gamma$ and such that for every point $x\in\Gamma$ its height is the reciprocal of the length of the tangent segment drawn from $x$ to $\Gamma_2$, the tangent segment is chosen in such a way that the sense of travel is counterclockwise  (see Fig. \ref{Jacobi}). A geometric interpretation of the invariant measure assigned to the Poncelet map $F$ is that for any chord of $\Gamma$, to say $[a,b]$, tangent to $\Gamma_2$, the area of the part of the Jacobi's surface over the arc $\widehat{ab}$ is equal to a constant number $S$ (see for instance \cite{Hrasko}). From this geometric interpretation the following fact is easy to see: If $A$ denotes the area of the whole Jacobi's surface, then $\Gamma_2$ has the closure property if and only if $\frac{S}{A}$ is a rational number. Indeed, the number $\frac{S}{A}$ is called the \textit{rotation number}. 

As was proved by A. O. Lopes and M. Sebastiani in \cite{Lopes}, the rotation number changes continuously for concentric circles which change their radii continuously. Hence, if we decrease continuously the radius $r_2$ and keep the center $p$, the ratio $\frac{S}{A}$ changes continuously. Thus, there exist a number $r_3$ in the open interval $(r_1,r_2)$ such that $\frac{S}{A}$ is irrational. It follows that the circle $\Gamma_3$, with center $p$ and radius $r_3$ does not have the closure property. \fin

\emph{Proof of Theorem \ref{Jacobi}.} 
First, we observe that the point $p$ is in the interior of $K$. Otherwise, we would find an $z\in \mathbb S^1$ such that $p\notin L_z$ which would contradict the hypothesis. In fact, if $p\notin \inte K$, let $W$ be a supporting line of $K$ which either \textit{separates} $p$ and $K$ or $p\in W$. Let $z$ be a point in the intersection $W\cap \mathbb S^1$. Then $p\notin L_z$ which contradicts the hypothesis.

Let $r_1$ be largest number such that the circle $\Gamma_1$ with center $p$ and radius $r_1$ is contained in $K$. Analogously, let $r_2$ be the smallest number such that the circle $\Gamma_2$ with center $p$ and radius $r_2$ encloses to $K$. If $r_1=r_2$ then $K$ is a disc. Suppose now that $r_1<r_2$. Let $x_0,x_1,x_2,x_3, \ldots$ be points in $\mathbb S^1$ arranged in counter clockwise order and such that every segment $[x_0,x_1]$, $[x_1,x_2]$, $[x_2,x_3]$, . . . is a supporting segment of $K$. By the condition that the angle bisectors $L_{x_0}$, $L_{x_1}$, $L_{x_2}$, $L_{x_3}$, . . . pass through $p$, we have that all the segments $[x_0,x_1]$, $[x_1,x_2]$, $[x_2,x_3]$, . . . are at the same distance $r$ from the point $p$. It follows that the circle with center $p$ and radius $r$ shares the tangent segments $[x_0,x_1]$, $[x_1,x_2]$, $[x_2,x_3]$, . . . with $K$. Now we apply Lemma \ref{La_Catedral} with $\Gamma=\mathbb S^1$, hence we have that there exists a number $r_3$, with $r_1<r_3\leq r_2$ such that the circle $\Gamma_3$ does not have the closure property with respect to $\mathbb S^1$. Since the distance from $p$ to the support lines of $K$ changes continuously, we have that there exists a support line $\ell$ of $K$ at distance $r_3$ from $p$. Let $x_0$ and $x_1$, in counter clockwise order, be the points where $\ell$ intersects 
$\mathbb S^1$. The Poncelet-polygonal $(x_0,x_1,x_2,x_3, \ldots)$ has its sides tangent to $\Gamma_3$ and $K$ simultaneously, however, the set $\{x_0,x_1,x_2,x_3, \ldots\}$ is a dense set in $\mathbb S^1$. It follows that every line tangent to $\Gamma_3$ is also a support line of $K$, therefore, $K$ is a disc with center $p$. \fin

\emph{Proof of Theorem \ref{babel}.} 
Notice that the point $p$ is in the interior of $K$. The proof is derived from an argument similar to the one given at the beginning of the proof of Theorem \ref{blanco}.

Let $\Gamma$ be any $2$-dimensional plane through $p$. For every point $x\in\Gamma\cap \mathbb S^{n-1}$, the line $L_x$ is the angle bisector between the two support lines of $K\cap \Gamma$ through $x$. By hypothesis, $L_x$ passes through $p$, hence we have the conditions of Theorem \ref{blanco}, and so we have that $\Gamma \cap K$ is a disc with center at $p$. Since this is true for every plane $\Gamma$ through $p$, we conclude by a theorem due to H. Busemann (see \cite{Busemann}, pp. 91-92) that $K$ is a ball with center at $p$. \fin

We first prove the following lemma and then the conclusion of the corollary follows easily.

\begin{lemma}\label{erotico}
Under the conditions of Corollary \ref{matsu}, there exists a point $p\in \inte K$ such that for every $x \in \mathbb{S}^{n-1} $ the axis $L_x$ of $C_x$ passes through $p$.
\end{lemma}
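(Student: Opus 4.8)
The plan is to exhibit $p$ as the common point of the family of axes $\{L_x\}_{x\in\Sn}$; that $p$ then lies in $\inte K$ is automatic, because for any $x\in\Sn$ the radial projection of $K$ from $x$ onto the unit sphere centred at $x$ is a round spherical cap whose spherical centre is the direction of $L_x$ pointing towards $K$, so that ray of $L_x$ meets $\inte K$. The argument can be run in $\R^n$ uniformly; alternatively, for $n>3$ one may intersect with $3$-flats through the centre of $\Sn$ and reduce to $n=3$, since a section of a right circular cone by a flat through its apex is again a right circular cone in that flat, with axis the orthogonal projection of the original axis, and it still circumscribes the corresponding section of $K$.

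The key is a local identity for the axis direction. Fix $x\in\Sn$ and let $q$ be a point of the horizon $\gamma_x=C_x\cap\bd K$; equivalently, the supporting hyperplane $\Pi_q$ of $K$ at $q$ passes through $x$. Since $C_x$ touches $K$ along $\gamma_x$, the generator $\overline{xq}$ of $C_x$ makes the half-angle $\alpha_x$ of $C_x$ with the unit axis vector $v_x$, while the outer unit normal of $C_x$ at $q$ is the outer unit normal $\nu_q$ of $K$ at $q$ and makes the angle $\tfrac{\pi}{2}+\alpha_x$ with $v_x$. As $\overline{xq}\subset\Pi_q$, the directions $\widehat{q-x}$ and $\nu_q$ are orthogonal, and since $\|v_x\|=1$ these data force $v_x$ to have no component orthogonal to their span; hence, in every dimension,
\[
v_x=\cos\alpha_x\,\widehat{q-x}-\sin\alpha_x\,\nu_q .
\]
Consequently $L_x$ meets the inner normal line $N_q=\{q-t\nu_q:t\ge 0\}$ of $K$ at $q$, precisely at the point $q-\big(\|q-x\|\tan\alpha_x\big)\nu_q$.

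So every axis whose horizon passes through $q$ crosses the fixed line $N_q$; the crux is that they all cross it at the \emph{same} point — that is, the ``depth'' $\delta(x,q):=\|q-x\|\tan\alpha_x$ depends only on $q$. Granting this, write $\rho(q)$ for the common value and put $p_q:=q-\rho(q)\nu_q$, so that $p_q\in L_x$ whenever $q\in\gamma_x$. If $x,y\in\Sn$ and the line through them misses $\inte K$ (in particular for all nearby $x,y$), any supporting hyperplane of $K$ containing that line touches $\bd K$ at some $q\in\gamma_x\cap\gamma_y$, whence $L_x$ and $L_y$ both pass through $p_q$; using that the apexes fill $\Sn$ — so that $\{L_x\}$ is not contained in any plane — together with the elementary fact that pairwise intersecting lines not all coplanar are concurrent (applied to suitable non-coplanar triples of axes whose pairwise lines avoid $\inte K$, and a connectedness argument in $x$), all the $L_x$ pass through one point $p$.

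The main obstacle is the depth-independence $\delta(x,q)=\rho(q)$: a priori both $\|q-x\|$ and $\alpha_x$ vary as $x$ runs over the circle $\Pi_q\cap\Sn$, and only their product should be rigid (for a ball that product is the radius). I expect to obtain it by using that $C_x$ is tangent to $K$ along the \emph{entire} horizon and not only at $q$, so that inside a $2$-plane containing the common normal line $N_q$ one is led to a planar ``looks-symmetric-from-a-circle'' configuration governed by Theorem \ref{blanco}; the apparent circularity — one seems to need concurrency in order to invoke Theorem \ref{blanco} — is to be broken by first establishing concurrency inside one such plane through $\inte K$ by a compactness argument in the parameter $x\in\Pi_q\cap\Sn$. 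Proving this rigidity, and then the routine passage from the resulting pairwise incidences to a single common point, is the heart of the proof.
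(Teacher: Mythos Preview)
Your local identity $v_x=\cos\alpha_x\,\widehat{q-x}-\sin\alpha_x\,\nu_q$ is correct and pretty, but the proof as written has a genuine gap: the ``depth-independence'' $\delta(x,q)=\|q-x\|\tan\alpha_x=\rho(q)$ is precisely the step that carries all the content, and you do not prove it. The paragraph beginning ``I expect to obtain it by\ldots'' is a plan, not an argument; the appeal to Theorem~\ref{blanco} is, as you yourself note, circular (Theorem~\ref{blanco} needs concurrency of bisectors as a hypothesis), and ``a compactness argument in the parameter $x\in\Pi_q\cap\Sn$'' is not specific enough to be checked. Worse, depth-independence is not an innocent intermediate step: your identity already shows that $L_x$ meets the normal line $N_q$ for every $q\in\gamma_x$, so if all the $L_x$ pass through a single point $p$ then $p$ lies on $N_q$ for \emph{every} $q\in\bd K$, i.e.\ $q-p$ is parallel to $\nu_q$ for all $q$, which is exactly the statement that $K$ is a ball centred at $p$. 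Thus your depth-independence is essentially equivalent to the full Corollary~\ref{matsu}, not merely to Lemma~\ref{erotico}; trying to establish it first is doing the hard thing before the easy thing.

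The paper's proof avoids this entirely with a short symmetry argument that uses only the rotational invariance of a right circular cone. For $x,y\in\Sn$ with $\aff\{x,y\}\cap K=\emptyset$, take any pair $\Pi_1,\Pi_2$ of supporting hyperplanes of $K$ through $\aff\{x,y\}$; since every hyperplane through $L_x$ is a hyperplane of symmetry of $C_x$, the bisector hyperplane $\Pi_{1,2}$ of the dihedral angle must contain $L_x$, and by the same token it contains $L_y$. Intersecting over all such pairs gives a $2$-plane containing $L_x$, $L_y$ and $\aff\{x,y\}$; as $L_x$ and $L_y$ cannot be parallel, they meet. A short chain argument handles the case $\aff\{x,y\}\cap K\neq\emptyset$, and then pairwise intersection together with the fact that the apices fill $\Sn$ (so the axes are not all coplanar) gives a common point. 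This bypasses normals, horizons, and depth altogether; I would recommend replacing your depth-based scheme with this bisector argument.
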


\emph{Proof.} Let $x,y\in \mathbb{S}^{n-1}$ be any two points such that $L(x,y) \cap \inte K= \emptyset$. We are going to prove that $L_x\cap L_y\neq \emptyset$. Let $\Pi_1,\Pi_2$ be two support hyperplanes of $K$ containing $L(x,y)$. It is clear that $\Pi_1,\Pi_2$ are also support hyperplanes of $C_x$ and $C_y$. Let $\Pi_{1,2}$ be the hyperplane bisecting the solid dihedral angle determined by $\Pi_1,\Pi_2$ and containing $L(x,y)$. We denote by $\Sigma$ the hyperplane $\aff\{L_x,\Pi_1\cap  \Pi_2\}$. Since for every hyperplane $\Gamma,$ with $L_x \subset \Gamma$, the equality $S_\Gamma(C_x)=C_x$ holds, it follows that $S_\Sigma(\Pi_1)$ is a support plane of $C_x$ containing $\Pi_1\cap  \Pi_2$ and different from $\Pi_1$. Thus $S_\Sigma(\Pi_1)=\Pi_2$. Hence $\Pi_{1,2}=\Sigma$. Consequently, $L_x\subset \Pi_{1,2}$. In conclusion, we have that
$$
\aff\{L_x,x,y\}=\bigcap \Pi_{1,2} 
$$
holds, where the intersection is taken over  all pairs $\Pi_1,\Pi_2$ of support hyperplanes of $K$ such that $L(x,y)\subset \Pi_1,\Pi_2$.
Interchanging $x$ by $y$, due the symmetry of this argument, we conclude that
$$
\aff\{L_y,x,y\}=\bigcap \Pi_{1,2}. 
$$
holds, where again the intersection is taken over  all pairs $\Pi_1,\Pi_2$ of support hyperplanes of $K$ such that $L(x,y)\subset \Pi_1,\Pi_2$. Therefore $\aff\{L_x,x,y\}=\aff\{L_y,x,y\}$. Since $L_x$ and $L_y$ can not be parallel, we get $L_x\cap L_y\not=\emptyset.$  

Now, we are going to prove that all lines $L_x$ pass through the same point.
On the contrary, let us assume that is not the case. Then, there exist $x, y, z \in \mathbb{S}^ {n-1}$, such that $L_x\cap L_y \cap L_ z=\emptyset$. Since every two axes intersect, the lines $L_x$, $L_y$, $L_z$ span an affine two-dimensional plane $E$. But since, for every $w \in \mathbb{S}^{n-1}$, $L_w$ intersects all $L_x$, $L_y$, $L_z$, the line $L_w$ has to be contained in $E$. However, $w\in L_w \subset E$, hence $w\in E$, for all 
$w\in \mathbb{S}^{n-1}$, leading to a contradiction. 
\fin

\emph{Proof of Corollary \ref{matsu}.} By Lemma \ref{erotico} we have that $K$ satisfies the conditions of Theorem \ref{babel}. Therefore $K$ is a ball. \fin

\section{Proof of Theorems \ref{mari} and \ref{plano}}
We first note that under the conditions of Theorem \ref{mari}, $K$ must be a strictly convex body. On the contrary, let us assume that there is a line segment $I\subset \bd K$ and we denote by $L$ the line defined by $I$. Let $\Delta$ be a supporting plane of $K$ containing $L$. We take a point $x$ in 
$\Pi\cap \Delta$ such that $x\not=L\cap \Pi$ (if $L\cap \Pi=\emptyset$, $x$ is any point in $\Pi\cap \Delta$). Let $\Gamma$ be a plane containing $L$ and not passing through $x$. Since $S(K,x)$ is a right circular cone, then $\Gamma \cap S(K,x)$ is a conic such that $I\subset \bd(\Gamma \cap S(K,x))$, however, this is absurd. Thus $K$ is a strictly convex body. 

For the following lemmas we consider $K$ to be a strictly convex body.

\begin{lemma}\label{garnachas}
Let $K\subset \mathbb R^n,$ $ n\geq 2 $, be a convex body and let $\Sigma$ be a hyperplane such that $\Sigma \cap \inte K \neq \emptyset$. If for every $(n-2)$-dimensional affine plane $\Gamma \subset \Sigma$, with $\Gamma \cap K=\emptyset$, the two support hyperplanes of $K$ containing $\Gamma$ are symmetric with respect to $\Sigma$, then $K$ is symmetric with respect to $\Sigma$. 
\end{lemma}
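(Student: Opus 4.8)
The plan is to prove directly that $S_\Sigma(K)=K$, by showing that reflection in $\Sigma$ carries each supporting hyperplane of $K$ to a supporting hyperplane of $K$ and then reading the conclusion off from support functions.

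First I would dispose of a degenerate possibility: the hypothesis forces $\Sigma$ to meet $\inte K$. Otherwise $\Sigma$, meeting $K$ but not its interior, is a supporting hyperplane of $K$; pick any affine $(n-2)$-plane $\Gamma\subset\Sigma$ disjoint from $K$ (such $\Gamma$ exist since $K$ is bounded), so that there are exactly two supporting hyperplanes of $K$ through $\Gamma$, one of which is $\Sigma$ itself; the equal-angle hypothesis then forces the other to make angle $0$ with $\Sigma$, hence to coincide with $\Sigma$, a contradiction. So from now on $\Sigma\cap\inte K\neq\emptyset$, and I place the origin in $\Sigma\cap\inte K$; then $S_\Sigma$ is a linear orthogonal involution, the support function $h(u):=\max_{x\in K}\langle x,u\rangle$ is strictly positive, and $h_{S_\Sigma K}(u)=h(S_\Sigma u)$ for all $u$. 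Since the support function determines the body, it suffices to prove $h(u)=h(S_\Sigma u)$ for every $u\in\mathbb{S}^{n-1}$, which is exactly $K=S_\Sigma K$. Let $e$ be a unit normal of $\Sigma$.

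The heart of the argument is the claim: \emph{for every $u\in\mathbb{S}^{n-1}$ with $u\neq\pm e$, the reflected hyperplane $S_\Sigma\Pi_u$, where $\Pi_u=\{x:\langle x,u\rangle=h(u)\}$, again supports $K$}. Granting this, $S_\Sigma\Pi_u=\{y:\langle y,S_\Sigma u\rangle=h(u)\}$ supports $K$, and since $0\in\inte K$ and $h(u)>0$ the body must lie in $\{\langle\cdot,S_\Sigma u\rangle\le h(u)\}$, whence $h(S_\Sigma u)=h(u)$; the two remaining directions $\pm e$ follow by continuity of $h$, and the lemma is proved. To prove the claim, fix $u$ and set $\Gamma_u:=\Pi_u\cap\Sigma$, an affine $(n-2)$-plane (using $\Pi_u\not\parallel\Sigma$). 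The pencil of hyperplanes containing $\Gamma_u$ is a circle on which $S_\Sigma$ acts as the reflection fixing $\Sigma$ and the hyperplane through $\Gamma_u$ perpendicular to $\Sigma$, and for each $\theta\in(0,\pi/2)$ exactly two members of the pencil make angle $\theta$ with $\Sigma$, those two being interchanged by $S_\Sigma$. \emph{Case 1: $\Gamma_u\cap K=\emptyset$.} Then the hypothesis applies to $\Gamma_u$: its two supporting hyperplanes of $K$ (exactly two, as $\Gamma_u$ misses $K$) make equal angles with $\Sigma$; one of them is $\Pi_u$; the common angle is neither $0$ nor $\pi/2$ (either would make the two coincide, while $\Sigma$ itself does not support $K$), so the other supporting hyperplane is precisely $S_\Sigma\Pi_u$. \emph{Case 2: $\Gamma_u\cap K\neq\emptyset$.} Since $\Gamma_u=\Pi_u\cap\Sigma$ with $\Pi_u$ supporting $K$, the $(n-2)$-plane $\Gamma_u$ is a supporting plane of the convex body $K\cap\Sigma$ inside $\Sigma$; pushing it slightly off $K$ within $\Sigma$ produces $(n-2)$-planes $\Gamma\subset\Sigma\setminus K$ with $\Gamma\to\Gamma_u$. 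For each such $\Gamma$ the two supporting hyperplanes $\Pi_1(\Gamma),\Pi_2(\Gamma)$ of $K$ through $\Gamma$ satisfy $\Pi_2(\Gamma)=S_\Sigma\Pi_1(\Gamma)$ by the Case 1 reasoning, and as $\Gamma\to\Gamma_u$ they converge to the two extreme supporting hyperplanes $A$ and $S_\Sigma A$ of $K$ through $\Gamma_u$. The supporting hyperplanes of $K$ through $\Gamma_u$ form a closed sub-arc of the pencil with endpoints $A$ and $S_\Sigma A$, and this sub-arc is mapped onto itself by $S_\Sigma$; as $\Pi_u$ lies in it, so does $S_\Sigma\Pi_u$, which is therefore a supporting hyperplane of $K$. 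This completes the core step.

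The step I expect to be the real obstacle is Case 2, i.e. the situation in which $K$ has a positive-dimensional face (in particular a vertex) lying on $\Sigma$, which the generic $(n-2)$-planes of $\Sigma$ cannot detect: there the equal-angle information is available only in the limit, as the $(n-2)$-plane degenerates onto a supporting plane of $K\cap\Sigma$ along that face, and one must verify rigorously that this degeneration really forces the whole arc of supporting hyperplanes of $K$ through the limiting plane to be symmetric about $\Sigma$ (equivalently, that the tangent cone of $K$ along the face is $S_\Sigma$-invariant). The ancillary facts used above --- that the pencil of hyperplanes through an $(n-2)$-plane is a circle carrying $S_\Sigma$ as a reflection, that an $(n-2)$-plane disjoint from $K$ carries exactly two supporting hyperplanes of $K$ depending continuously on it, and the convergence to the extreme supporting hyperplanes in a degenerating family --- are routine but should be stated with care. (Alternatively, the whole lemma can be reduced to the case $n=2$ by projecting $\mathbb{R}^n$ along the $(n-2)$-dimensional subspaces of the direction space of $\Sigma$; this again leaves the planar non-smooth case as the only real point.)
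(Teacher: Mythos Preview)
Your argument is correct, and it is genuinely different from the paper's. The paper simply declares that it suffices to treat $n=2$ and then argues by contradiction: assuming $K\neq S_\Sigma(K)$, it picks a chord $[x,y]$ of $K$ perpendicular to $\Sigma$ whose midpoint is not on $\Sigma$, takes a supporting line $L_x$ at the nearer endpoint, lets $L_w$ be the second supporting line of $K$ through $p:=L_x\cap\Sigma$, observes that the equal-angle hypothesis forces $L_w=S_\Sigma(L_x)$, and then reads off that the farther endpoint $y$ lies on the wrong side of $L_w$. So the paper never touches support functions, never discusses the pencil through $\Gamma$, and never isolates your Case~2; it gets the planar contradiction in two lines from one well-chosen chord.

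What each approach buys: the paper's argument is shorter and entirely elementary, but it leaves implicit both the reduction to $n=2$ and the fact that one can choose $L_x$ with $L_x\cap\Sigma\in\Sigma\setminus K$ (your Case~2 is exactly the situation in which that fails for a particular supporting line, and the paper sidesteps it by having the freedom to pick the chord $[x,y]$ first). Your route is more structural: proving $h(S_\Sigma u)=h(u)$ by showing that $S_\Sigma$ permutes supporting hyperplanes works uniformly in $n$, makes the role of the hypothesis transparent (it literally says the two tangent members of each pencil are $S_\Sigma$-related), and forces you to confront the non-smooth boundary case head-on via the limiting argument you outline. Your opening observation that $\Sigma$ must meet $\inte K$ is also a point the paper does not isolate. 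The one place to tighten, as you already flag, is the convergence in Case~2 of $\Pi_1(\Gamma),\Pi_2(\Gamma)$ to the two \emph{extreme} supporting hyperplanes through $\Gamma_u$; once that is written out carefully, the $S_\Sigma$-invariance of the supporting arc and hence of $\Pi_u$ follows exactly as you say.
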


\emph{Proof.} Consider the reflected body $K'=S_{\Sigma}(K).$ By the hypotheses of the lemma, we have that $K$ and $K'$ share the same support hyperplanes except possibly for the points in $\Sigma\cap \bd K.$ However, $\Sigma\cap K'=\Sigma\cap K$, then we have that $K'=K$ which means that $K$ is symmetric with respect to $\Sigma$. \fin

\emph{Proof of Theorem \ref{mari}.} 
In order to prove Theorem \ref{mari} we need to define the Steiner symmetrization. Let $H\subset \Rn$ be a hyperplane. Let $C \subset \Rn$ be a nonempty compact set. According to \cite{schneider} the \textit{Steiner symmetral} of $C$ with respect to $H$ is the set $S_{H}C$ with the property that, for each line $G$ orthogonal to $H$ and meeting $C$, the set $G \cap  S_{H}C$ is a closed segment with midpoint on $H$ and length equal to that of the set $G\cap C$. The mapping $S_H : C\rightarrow S_{H}C$ is the \textit{Steiner symmetrization} with respect to $H$.

Let $H_1,...,H_k$ be hyperplanes through $o$. The mapping $S_{H_k}\circ \cdot \cdot  \cdot  \circ S_{H_1}$ is called an iterated Steiner symmetrization.
Let $\mathcal{S}(K)$ be the set of convex bodies that arise from $K$ by applying iterated Steiner symmetrizations. The Theorem10.3.2 of \cite{schneider} affirms that if $K\subset \Rn$ is a convex body, then 
$\mathcal{S}(K)$ contains a sequence that converges to a ball.

By the same argument in the beginning of the proof of Lemma \ref{erotico}, we have that there exists a point $p$ such that for every $x\in \Pi\setminus K$ the axis $L_x$ passes through $p$. 

First we prove the theorem for dimension $n=3$. We are going to prove, using Lemma \ref{garnachas}, that each plane $\Sigma$ passing through $p$ is plane of symmetry for $K$. 

Let $\Sigma$ be a $2$-dimensional plane, with $p\in \Sigma$. We denote by $L$ the intersection $\Sigma \cap \Pi$.  Let $\Gamma \subset \Sigma \setminus K$ be a line. First, we assume that 
$\Gamma$ is not parallel to $L$. Denote by $x$ the intersection $\Gamma \cap L$. If $p\in \Pi$ (see Fig. \ref{pum}), then for each line $M \subset \Pi$, $p\in M$, and for each $y\in M$, the axis $L_y$ is equal to $M$. In fact, by Lemma \ref{erotico}, for each $y\in M$, the axis $L_y$ is determined by the points $y$ and $p$. In particular, $L=L_x$ and $L\subset \Sigma$ and, consequently, we have that $\Sigma$ is a plane of symmetry for the cone $C_x$. 
In the case $p\notin \Pi$, since $p,x \in \Sigma$, then $L_x \subset \Sigma$, and again $\Sigma$ is a plane of symmetry for the cone $C_x$. Therefore, in both cases, there exist two support planes of $C_x$ and, consequently, of $K$, symmetric with respect to $\Sigma$ and containing $\Gamma$. 
Analogously, the same conclusion is obtained if we assume that 
$\Gamma$ is parallel to $L$. Thus $K$ satisfies the condition of Lemma \ref{garnachas}. Hence $\Sigma$ is a plane of symmetry for $K$.  

We take a system of coordinates with the origen $o$ at $p$. Since every plane $\Sigma$, $p\in \Sigma$, is a plane of symmetry for $K$, it follows that $K=S_{\Sigma}K$. Consequently, $\mathcal{S}(K)=\{K\}$. On the other hand, by Theorem 10.3.2 of \cite{schneider}, $\mathcal{S}(K)$ contains a sequence that converges to a ball. Thus $K$ is a ball.

\begin{figure}[H]\label{nao}
\includegraphics[width=.52\textwidth]{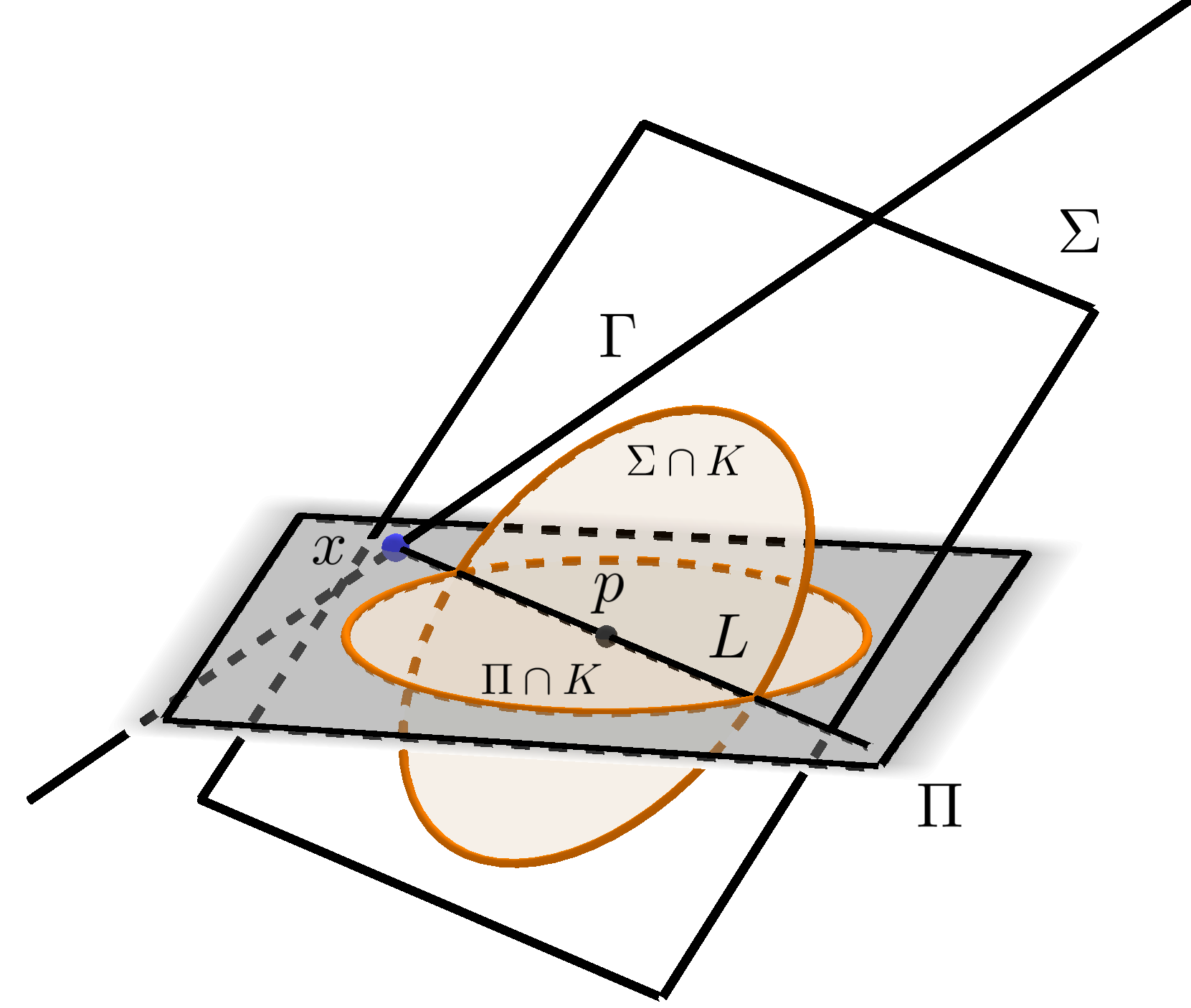}  
\centering
\caption{$\Sigma$ is a plane of symmetry of $K$.}
\label{pum}
\end{figure} 
We proceed by induction. Let $\Gamma$ be an affine $(n-1)$-dimensional plane passing through $p$. Then, for all $x\in \Gamma \cap \Pi$, the axis $L_x$ is equal to $L(x,p)$, hence $L_x\subset \Gamma$. It follows that $C_x \cap \Gamma$ is a circular cone (in dimension $n-1$) that circumscribes $\Gamma \cap K$. According to the induction hypothesis $\Gamma \cap K$ is a ball.  Hence all the $(n-1)$-dimensional sections of $K$ passing through $p$ are $(n-1)$-dimensional balls. Therefore, $K$ is a ball. \fin

\emph{Proof of Theorem \ref{plano}.} We consider first the case $n=2$. Let $x$ be any point in $\Pi\setminus K$ and let $\ell$ be the other support line of $K$ through $x$. Let $\Omega$ be the disc with center $p$ and tangent to $\Pi$ and $\ell$. For every $x$ in $\Pi$ the angle bisector of the angle circumscribed to $\Omega$ from $x$, passes through $p$. Then, $K$ and $\Omega$ share the same support lines and so they coincide, i.e., $\Omega=K$. 

Now, in dimension $n>2$ we proceed as follows: Let $z$ be a point in $\Pi\cap K$ and let $q\in \bd K$ be the point such that the segment $[z,q]$ contains $p$. Let $\Gamma$ be any $2$-dimensional plane which contains $[z,q]$ and let $\ell:=\Pi\cap \Gamma$. Let $x\in \ell$ be any point. Since the axis $L_x$ passes through $p$, by the $2$-dimensional case we have that $\Gamma\cap K$ is a $2$-dimensional disc with center at $p$. Since $\Gamma$ is any $2$-dimensional  plane which contains $[z,q]$, we conclude that $K$ is a ball with center at $p$. \fin

\section{Further comments}

Finally, we propose the following problem, which could be considered as the following natural step in way of the solution of Conjecture \ref{gruber}.

\begin{conjecture}\label{chiquita}
Let $K$ be a convex body in the interior of $\mathbb B^n$, with $n\geq 3 $, and let $L \subset \mathbb R^n$ be a line. If for every  $x\in \mathbb{S}^{n-1}$ the cone $C_x$ is a symmetric cone such that $$L_x \cap L\not=\emptyset,$$
then $K$ is an $n$-dimensional ellipsoid and for every $3$-dimensional plane $\Pi$ containing $L$, the section $\Pi \cap K$ is an ellipsoid of revolution with axis $L$.  
\end{conjecture}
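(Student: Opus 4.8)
\textit{Towards a proof of Conjecture \ref{chiquita}.} The natural strategy follows the pattern of Theorem \ref{babel}: one would first reduce the general $n$ to the case $n=3$ by taking sections through $L$, and then isolate a two-dimensional core problem. The crucial difference with the situation of Theorem \ref{babel} is that this planar core problem is genuinely weaker than Theorem \ref{blanco} and does not seem to be self-contained --- it must be attacked using the full three-dimensional information. I expect that obstacle to be the heart of the matter, which is why the statement is only conjectural.

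\textbf{Reduction to $n=3$.} Suppose the conjecture holds in dimension $3$ and let $n\geq 4$. The hypotheses force $L\cap\inte K\neq\emptyset$ (the contrary case is easily dispatched using Theorem \ref{babel}); fix $p\in L\cap\inte K$ and let $\Pi$ be any $3$-plane with $L\subset\Pi$. Then $p\in\Pi\cap\inte K$, so $K\cap\Pi$ is a convex body in $\Pi$ strictly inside the $2$-sphere $S:=\mathbb{S}^{n-1}\cap\Pi$. For $x\in S$ with $x\notin L$ the axis $L_x$ lies in $\Pi$, since $L_x$ passes through the apex $x\in\Pi$ and through the point $L_x\cap L\in\Pi$. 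A routine computation then shows that $C_x\cap\Pi$ is again a symmetric cone: the base hyperplane of $C_x$ is orthogonal to $L_x$, so $\Pi$ meets it in a $2$-plane through the centre, orthogonal to $L_x$, and $C_x\cap\Pi$ is the cone with apex $x$ over the resulting centrally symmetric plane curve; moreover it circumscribes $K\cap\Pi$ and its axis $L_x$ meets $L$. The finitely many $x\in S\cap L$ are dealt with by a limiting argument using the smoothness of $K$. Hence, after rescaling $S$ to the unit sphere, $K\cap\Pi$ satisfies the hypotheses of the conjecture in dimension $3$, so it is an ellipsoid of revolution with axis $L$. Since every $2$-plane through $p$ lies in some $3$-plane containing $L$, every planar section of $K$ through $p$ is an ellipse, whence $K$ is an $n$-ellipsoid by the classical characterization of ellipsoids through planar sections; and, all its $3$-sections through $L$ being surfaces of revolution about $L$, $K$ itself is a revolution ellipsoid with axis $L$.

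\textbf{The planar core ($n=3$).} Fix a $2$-plane $\Sigma$ with $L\subset\Sigma$ and $\Sigma\cap\inte K\neq\emptyset$, and put $M:=K\cap\Sigma$, so the circle $\mathbb{S}^{2}\cap\Sigma$ contains $M$ in its interior. For $x$ on this circle with $x\notin L$ one has $L_x\subset\Sigma$ as above, and cutting the symmetric cone $C_x$ with the plane $\Sigma$, which contains its axis, yields exactly the two supporting lines of $M$ at $x$; their bisector is precisely $L_x$, because the apex $x$ lies on the axis, which is orthogonal to the base, so the median from $x$ to the midpoint of the relevant chord of the base is the angle bisector at $x$. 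Thus, for every $x$ on the circle, the bisector $l_x$ of $M$ meets the fixed line $L$. One would like to conclude, in the spirit of Theorem \ref{blanco}, that $M$ is an ellipse with one axis contained in $L$; but unlike in Theorem \ref{blanco}, where the bisectors pass through a common point and the Poncelet porism can be invoked, here ``meeting a point of $L$'' is far weaker and is satisfied by many non-elliptical figures, so this per-section information is not enough by itself.

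\textbf{The main obstacle.} The missing rigidity must come from the fact that the planar data for the various $\Sigma$ are coupled: the three-dimensional cone $C_x$ being a symmetric cone ties together the supporting-line configurations of $M=K\cap\Sigma$ as $\Sigma$ rotates about a line through $x$, prescribing for each $x\in\mathbb{S}^{2}$ a centrally symmetric silhouette of $\bd K$ whose centre-ray $L_x$ meets $L$. The plan would be to exploit this coupling either (a) to propagate the symmetric-cone property from $\mathbb{S}^{2}$ to all exterior points --- a continuation argument in the spirit of Lemmas \ref{vaquera} and \ref{fer}, but substantially more delicate --- and then invoke the Gruber--Odor theorem \cite{odor} to get that $K$ is an ellipsoid, the line condition together with Remark \ref{mar} forcing revolution about $L$; or (b) to show directly that each section $M=K\cap\Sigma$ is an ellipse with one axis on $L$ and that these ellipses are mutually congruent, using the symmetric-cone constraint and the existence of a common affine diameter of $K$ transverse to the pencil, as in the proof of Lemma \ref{sensual}. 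The hard part --- and the reason the statement remains conjectural --- is exactly this coupling: turning the essentially infinitesimal content ``every shadow from the sphere is centrally symmetric, with centre-ray meeting $L$'' into a global rigidity statement, without the common centre of the bisector pencil that made Theorems \ref{blanco} and \ref{babel} tractable.
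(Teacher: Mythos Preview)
The paper does not prove Conjecture~\ref{chiquita}; it is explicitly posed at the very end as an open problem, described there as ``the following natural step in the way of the solution of Conjecture~\ref{gruber}''. There is therefore no proof in the paper to compare your write-up against.

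Your submission is likewise not a proof, and you are candid about this: you sketch a reduction to dimension $3$ that parallels the inductive scheme used for Theorem~\ref{babel}, isolate the planar core, and then correctly identify the genuine obstacle --- that the bisector-meets-a-fixed-line condition is strictly weaker than the concurrent-bisectors condition of Theorem~\ref{blanco}, so the Poncelet argument does not go through section by section. Your two suggested attack routes (propagation of the symmetric-cone property to all exterior points followed by Gruber--\'Odor, or direct coupling of the planar sections in the spirit of Lemma~\ref{sensual}) are reasonable heuristics, but, as you say yourself, neither is carried out. In short: there is no gap to flag beyond the one you already flag, and no paper proof to contrast with.
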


\textbf{Acknowledgments}\\
We thank Sergei Tabachnikov for the helpful discussions about the proof of Theorem 1. We also thank the anonymous referees for all the comments that help to improve the paper.


\begin{thebibliography}{99}
\bibitem{mora} J. Arocha, L. Montejano and E. Morales, \emph{A quick proof of Hobinger-Burton-Larman Theorem}. Geom. Dedicata {\bf 63} (3) (1996), $331-335$.

\bibitem{Bianchi} G. Bianchi, P. M. Gruber, \emph{Characterizations of ellipsoids}. Arch. Math. {\bf 49} (1987), $344-350.$ 

\bibitem{Busemann} H. Busemann, \emph{The Geometry of Geodesics.} Academic Press, New York (1955).

\bibitem{Flatto} L. Flatto, \emph{Poncelet's theorem.} American Mathematical Society, Providence, RI, (2009).

\bibitem{gardner} R. Gardner. \emph{Geometric Tomography} (second edition), Encyclopedia of Math. and its Appl. {\bf 58}, Cambridge University Press, Cambridge, 2006 (first edition in 1996).

\bibitem{odor} P. Gruber, T. \'Odor, \emph{Ellipsoids are the most symmetric bodies}. Arch. Math. {\bf 73} (1999), $394-400.$ 

\bibitem{hilbert} D. Hilbert, S. Cohn-Vossen. \emph{Geometry and the imagination}. Chelsea Publication Company. New York. 1990.

\bibitem{Hrasko} A. Hrask\'o, \emph{Poncelet's theorem.} Mathematical and physical journal for secondary schools, {\bf 1} (2003), $21-31$.

\bibitem{jero} J. Jer\'onimo-Castro, E. Rold\'an-Pensado, \emph{A characteristic property of the Euclidean disc}, Period. Math. Hungar. {\bf 59} (2009),$213-222.$

\bibitem{King} J. L. King, \emph{Three problems in search of a measure.} Amer. Math. Monthly {\bf 101} (1994), $609-628.$

\bibitem{Kurusa} \'A. Kurusa,T. \'Odor. \emph{Isoptic characterization of spheres}, J. Geom. \textbf{106} (2015), no.1, 63-73;

\bibitem{Lay} S. R. Lay, \emph{Convex sets and their applications.} Wiley, New York, (1982).

\bibitem{Lopes} A. O. Lopes, and M. Sebastiani, \emph{Poncelet pairs and the twist map associated to the Poncelet billiard.} Real Anal. Exchange {\bf 35} (2010), $355-374$.

\bibitem{matsura} S. Matsuura, \emph{A problem in solid geometry.}  J. Math. Osaka City Univ. {\bf 12} (1961), $89-95.$

\bibitem{myrosh} Myroshnychenko, \emph{On some characterizations of convex polyhedra}, J. Anal. Math. {\bf 149} (2023), no.1, 239-249.

 
\bibitem{monti-mora} L. Montejano and E. Morales, \emph{Characterization of ellipsoids and polarity in convex sets}. Mathematika {\bf 50} (2003), $63-72.$

\bibitem{schneider} R. Schneider. \emph{Convex Bodies: The Brunn-Minkowski Theory}. Second Expanded Edition. Cambridge University Press, 2014.


\bibitem{Tabachnikov} S. Tabachnikov, \emph{Geometry and Billiards.} Student Mathematical Library. American Mathematical Society, Providence (2005).

   

\end{thebibliography}
\end{document}